\newtheorem{thm}{Theorem}[section]
\newtheorem{cor}[thm]{Corollary}
\theoremstyle{definition}
\theoremstyle{remark}
\numberwithin{equation}{section}
\begin{document}

\title[]{ bounds for the gamma function}%
\author{Necdet Bat{\i}r}
\address{department of mathematics\\
faculty of sciences and arts\\
nev{\c{s}}eh{\i}r hac{\i} bekta{\c{s}} veli university, nev{\c{s}}eh\i r, turkey}
\email{nbatir@hotmail.com , nbatir@nevsehir.edu.tr}

\subjclass[2000]{Primary: 33B15; Secondary: 26D07, 11Y60.}%
\keywords{Gamma function, digamma function, psi functions, inequalities.}

\date{November 10, 2016}
\begin{abstract} We improve the upper bound of the following inequalities for the gamma function $\Gamma$ due to H. Alzer and the author.
\begin{equation*}
\exp\left(-\frac{1}{2}\psi(x+1/3)\right)<\frac{\Gamma(x)}{x^xe^{-x}\sqrt{2\pi}}<\exp\left(-\frac{1}{2}\psi(x)\right).
\end{equation*}
We also  prove the following new inequalities: For $x\geq1$
\[
\sqrt{2\pi}x^xe^{-x}\left(x^2+\frac{x}{3}+a_*\right)^{\frac{1}{4}}<\Gamma(x+1)<\sqrt{2\pi}x^xe^{-x}\left(x^2+\frac{x}{3}+a^*\right)^{\frac{1}{4}}
\]
with  the best possible constants $a_*=\frac{e^4}{4\pi^2}-\frac{4}{3}=0.049653963176...$, and $a^*=1/18=0.055555...$,
and for $x\geq0$
\begin{equation*}
\exp\left[x\psi\left(\frac{x}{\log (x+1)}\right)\right]\leq\Gamma(x+1)\leq\exp\left[x\psi\left(\frac{x}{2}+1\right)\right],
\end{equation*}
where $\psi$ is the digamma function.
\end{abstract}
\maketitle
\section{Introduction }
For a positive real number $x$ the gamma function $\Gamma$  is defined by
$$\Gamma (x)=\int_0^\infty {u^{x - 1}e^{ - u}du}.$$
The most important function related to the gamma function is the digamma or psi function, which is defined by as the  logarithmic derivative of $\Gamma$, that is, $\psi(x)=\Gamma'(x)/\Gamma(x), x>0$. Furthermore, the derivatives $\psi', \psi'',...,$ are called the polygamma functions. The gamma function plays a very important role in many branches of mathematics and science. It has applications in the theory of special functions, number theory, physics and statistics. A detailed description of the history and development of this function can be found in \cite{12}.  In the last decade there has been intensive interest for the gamma function, and  it has been published many remarkable inequalities for it by many authors. For an overview of approximations for the gamma function, the readers are referred to [1-6,8-11,13,18]  and bibliographies in these papers. The gamma function satisfies the functional equation $\Gamma(x+1)=x\Gamma(x)$, and has the following canonical product representation
\begin{equation*}
\Gamma(x+1)=e^{-\gamma x}\prod_{k=1}^{\infty}\frac{k}{k+x}e^{x/k}\quad  x>-1,
\end{equation*}
where $\gamma=0.57721...$ is the Euler-Mascheroni constant; see \cite[pg.346]{14}.
Taking the logarithm of both sides of this formula, we obtain for $x>-1$
\begin{equation}\label{e:1}
\log\Gamma(x+1)=-\gamma x+\sum\limits_{k=1}^\infty\left[\frac{x}{k}-\log(x+k)+\log k\right].
\end{equation}
Differentiation gives
\begin{equation}\label{e:2}
\psi(x+1)=-\gamma+\sum\limits_{k=1}^\infty\left[\frac{1}{k}-\frac{1}{k+x}\right] \quad x>-1.
\end{equation}

In this work we continue studying this issue. In \cite{4} Alzer and the author studied monotonicity property of the function
\begin{equation*}
G_c(x)=\log\Gamma(x)-x\log x+x-\frac{1}{2}\log (2\pi)+\frac{1}{2}\psi(x+c),
\end{equation*}
and they proved that $G_a(x)$ is completely monotonic on $(0,\infty)$ if and only if $a\geq1/3$, while $-G_b(x)$ is completely monotonic if and only if $b=0$. Consequently, they proved for $x>0$ that
\begin{equation}\label{e:3}
\exp\left(-\frac{1}{2}\psi(x+1/3)\right)<\frac{\Gamma(x)}{x^xe^{-x}\sqrt{2\pi}}<\exp\left(-\frac{1}{2}\psi(x)\right),
\end{equation}
In 2012 $q$ analogue of these inequalities has been obtained by A. Salem \cite{16}. An interesting improvement of them can be found in \cite{15}. The lower bound here is extremely accurate  but the same thing can not be said for the upper bound. Our first aim in this work is to improve the upper bound and to replace it by a much better one. We recall that a function $f$ is said to be completely monotonic on an interval $I$ if it has derivatives of all orders on $I$ and  $(-1)^nf^{(n)}(x)\geq0$ for all $x\in I$ and n=0,1,2,... ; see \cite{19}.

In \cite{6} the author proved the following asymptotic formula for the factorial function
\[
n!\approx \sqrt{2\pi}n^ne^{-n}\left(n^2+\frac{n}{3}+\frac{1}{18}-\frac{2}{405n}-\frac{31}{9720n^2}+\cdots.\right)^{\frac{1}{4}}.
\]
Our second aim is motivated by this formula and we intend  to determine the largest real number $a_*$ and the smallest real number $a^*$ such that the following inequalities hold for all $x\geq 1$.
\begin{equation*}
\sqrt{2\pi}x^xe^{-x}\left(x^2+\frac{x}{3}+a_*\right)^{\frac{1}{4}}<\Gamma(x+1)<\sqrt{2\pi}x^xe^{-x}\left(x^2+\frac{x}{3}+a^*\right)^{\frac{1}{4}}.
\end{equation*}
Our third aim here is to provide elegant and new lower and upper bounds for the gamma function in terms of the digamma function $\psi.$ More precisely, we prove that
\begin{equation*}
\exp\left[x\psi\left(\frac{x}{\log (x+1)}\right)\right]\leq\Gamma(x+1)\leq\exp\left[x\psi\left(\frac{x}{2}+1\right)\right], x\geq0.
\end{equation*}
It is worth to note that these bounds are interesting because they don't contain the terms $\sqrt{2\pi}$, $x^x$ and $e^{-x}$ as being usually.
The algebraic and numerical computations have been carried out with the aid of computer software \textit{Mathematica 10}.

We are ready to present our main results.

\section{Main results}
Our first theorem improves the upper bound given in (\ref{e:3}).
\begin{thm}Let $x>0$. Then we have
\begin{equation*}
\exp\bigg\{-\frac{1}{2}\psi(\delta_*(x))\bigg\}<\frac{\Gamma(x)}{x^xe^{-x}\sqrt{2\pi}}<\exp\bigg\{-\frac{1}{2}\psi(\delta^*(x))\bigg\},
\end{equation*}
where
\begin{equation*}
\delta_*(x)=x+\frac{1}{3} \quad \mbox{and} \quad \delta^*(x)=\frac{1}{2}\frac{1}{(x+1)\log(1+1/x)-1}.
\end{equation*}
\end{thm}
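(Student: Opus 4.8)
Since the lower bound is precisely inequality \eqref{e:3}, already established in \cite{4}, the whole problem is the upper bound. Put
\[
F(x)=\log\Gamma(x)-x\log x+x-\tfrac12\log(2\pi)=\log\frac{\Gamma(x)}{x^xe^{-x}\sqrt{2\pi}},
\]
so that, after taking logarithms and using that $\psi$ is strictly increasing, the claim becomes $\psi(\delta^*(x))<-2F(x)$ for $x>0$. The first thing I would record is the exact recurrence that follows from $\Gamma(x+1)=x\Gamma(x)$:
\[
F(x)-F(x+1)=(x+1)\log\!\Big(1+\tfrac1x\Big)-1=\frac{1}{2\delta^*(x)} .
\]
Combined with $\psi(y+1)=\psi(y)+1/y$, this rewrites the desired inequality at the point $x$ in the equivalent form
\[
F(x+1)<-\tfrac12\,\psi\!\big(\delta^*(x)+1\big).
\]

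This reformulation is the engine of the proof, because it yields a descent. If one knows the theorem's upper bound at $x+1$, i.e. $F(x+1)<-\tfrac12\psi(\delta^*(x+1))$, and if $\delta^*(x+1)\ge\delta^*(x)+1$, then monotonicity of $\psi$ gives $-\tfrac12\psi(\delta^*(x+1))\le-\tfrac12\psi(\delta^*(x)+1)$, hence the inequality at $x$. So the second step is the elementary estimate $\delta^*(x+1)>\delta^*(x)+1$ for all $x>0$; writing $\delta^*(x)=1/\big(2r(x)\big)$ with $r(y)=(y+1)\log(1+1/y)-1$, this is equivalent to $r(x)-r(x+1)>2r(x)r(x+1)$, an inequality among elementary functions (logarithms and rational functions) that can be settled by expanding $\log(1+1/x)$ and checking the resulting power/partial-fraction inequality — some care is needed since the margin is only of order $x^{-2}$, but no gamma functions are involved.

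The third step is the base case: $\psi(\delta^*(x))<-2F(x)$ for all $x\ge x_0$ with an explicit $x_0$. Here I would use Binet's representation $F(x)=-\tfrac12\log x+\mu(x)$, so $-2F(x)=\log x-2\mu(x)$, together with the classical alternating asymptotic expansions $\mu(x)\sim\frac1{12x}-\frac1{360x^3}+\cdots$ and $\psi(y)\sim\log y-\frac1{2y}-\frac1{12y^2}+\cdots$, each truncation of which is a genuine two-sided bound. Expanding $\delta^*(x)=x+\tfrac13-\tfrac1{18x}+\cdots$ from the definition and substituting, one finds
\[
-2F(x)-\psi(\delta^*(x))=\frac{1}{36x^{2}}+O\!\Big(\frac1{x^{3}}\Big)>0 ,
\]
and quantifying the remainders pins down an admissible $x_0$. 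A downward induction from $x\ge x_0$, using the descent of the second step, then delivers the inequality on all of $(0,\infty)$.

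The main obstacle is the sharpness of the bound. As with the best-possible constant $a_*$ in the next theorem, the phenomenon here is that $-2F(x)-\psi(\delta^*(x))$ vanishes to second order at infinity; in particular the convenient estimate $\psi(y)<\log y-\frac1{2y}$ is far too lossy, and one is forced to carry the $-\frac1{12y^2}$ term of the $\psi$-expansion together with rigorous remainder control for both $\psi$ and the Binet function $\mu$, and likewise to keep the $x^{-1}$-term (and control the $x^{-2}$-term) in the expansion of $\delta^*(x)$ and of $\delta^*(x+1)-\delta^*(x)-1$. The identity $F(x)-F(x+1)=\frac1{2\delta^*(x)}$ is exactly what lets one pay this price only once, on the half-line $[x_0,\infty)$, rather than uniformly on $(0,\infty)$.
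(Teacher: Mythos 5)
Your reduction is correct and it is a genuinely different route from the paper's. The identities $F(x)-F(x+1)=(x+1)\log(1+1/x)-1=\tfrac{1}{2\delta^*(x)}$ and $\psi(y+1)=\psi(y)+1/y$ do cancel exactly, so the claim at $x$ is equivalent to $F(x+1)<-\tfrac12\psi(\delta^*(x)+1)$, and your descent plus an asymptotic base case would prove the upper bound (the lower bound is indeed just (\ref{e:3})); your leading term $-2F(x)-\psi(\delta^*(x))=\tfrac{1}{36x^2}+O(x^{-3})$ checks out. The paper proceeds quite differently: it integrates (\ref{e:1}) over $(x,x+1)$, evaluates $\int_x^{x+1}\log\Gamma(u)\,du=x\log x-x+\tfrac12\log(2\pi)$ via the Barnes $G$-function, and encodes the resulting difference through mean-value parameters $\theta(k)\in(0,1)$; proving that $k\mapsto\theta(k)$ increases from $\theta(1)=\delta^*(x)-x$ to $\theta(\infty)=\tfrac13$ delivers \emph{both} bounds at once, exactly, with no asymptotics, no induction, and no remainder estimates. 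What your route buys is conceptual economy at the reduction step; what it still owes is the two technical payments you only sketch. First, $\delta^*(x+1)>\delta^*(x)+1$ is exactly the statement $f(x+1)>f(x)$ for the paper's function $f(u)=\tfrac12\,\frac{1}{(u+1)\log(1+1/u)-1}-u$, whose full monotonicity the paper proves by the substitution $u=1/t$ and the inequality $h(t)<0$ in (\ref{e:12}); you could import that lemma, but your proposed ``expand $\log(1+1/x)$ and check the power inequality'' is delicate as stated, since the expansion converges only for $x>1$ and the right-hand side $2r(x)r(x+1)$ involves products of logarithms, so cross terms do not reduce to a finite polynomial check. Second, the base case needs an explicit $x_0$: with a margin of only $\tfrac{1}{36x^2}$ you must carry rigorous two-sided remainders for the Binet function $\mu$, for the $\psi$-expansion through the $-\tfrac{1}{12y^2}$ term, and for $\delta^*(x)-x-\tfrac13+\tfrac{1}{18x}$, none of which is carried out. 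So as written this is a sound and interesting alternative plan rather than a complete proof; the paper's $\theta(k)$ argument is precisely engineered to avoid both of these costs.
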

\begin{proof}
In \cite[pg. 47, Eq.(42)]{17} it has been recorded that
\begin{equation}\label{e:4}
\int\limits_{0}^x\log\Gamma(u)du=\frac{x(1-x)}{2}+\frac{x}{2}\log(2\pi)-(1-x)\log\Gamma(x)-\log G(x),
\end{equation}
where $G$ is Barnes' $G$-function, which satisfies the functional equation
\begin{equation}\label{e:5}
G(z+1)=\Gamma(z)G(z)\quad \mbox{and}\quad G(1)=1.
\end{equation}
Please refer to \cite[pg.38-61]{17} for details. Using identities (\ref{e:4}) and (\ref{e:5}) we find that
\begin{equation}\label{e:6}
\int\limits_{x}^{x+1}\log\Gamma(u)du=x\log x-x+\frac{1}{2}\log (2\pi).
\end{equation}
Integrating both sides of the equation (\ref{e:1}) over $(x-1,x)$, we get
\begin{align}\label{e:7}
\int\limits_{x}^{x+1}\log\Gamma(u)du&=-\frac{\gamma(2x-1)}{2}+\sum\limits_{k=1}^\infty\bigg[\frac{2x-1}{2k}-(x+k)\log(x+k)\nonumber\\
&+(x-1+k)\log(x-1+k)+\log k+1\bigg].
\end{align}
By Taylor's Theorem, there exists a $\theta(k)$, depending on $x$,  such that $0<\theta(k)<1$ and
\begin{align}\label{e:8}
(x+k)\log(x+k)&-(x+k-1)\log(x+k-1)&\nonumber\\
&=\log(x+k-1)+1+\frac{1}{2}\frac{1}{x+k-1+\theta(k)}.
\end{align}
Therefore (\ref{e:7}) can be written as following:
\begin{align}\label{e:9}
\int\limits_{x}^{x+1}\log\Gamma(u)du&=-\gamma(x-1)+\sum\limits_{k=1}^\infty\bigg[\frac{x-1}{k}-\log(x-1+k)+\log k\bigg]\nonumber\\
&+\frac{1}{2}\bigg[-\gamma+\sum\limits_{k=1}^\infty\left(\frac{1}{k}-\frac{1}{x+k-1+\theta(k)}\right)\bigg].
\end{align}
By (\ref{e:1}) and (\ref{e:6}) this becomes
\begin{equation}\label{e:10}
x\log x-x+\frac{1}{2}\log(2\pi)-\log\Gamma(x)=\frac{1}{2}\bigg[-\gamma+\sum\limits_{k=1}^\infty\left(\frac{1}{k}-\frac{1}{k+x-1+\theta(k)}\right)\bigg].
\end{equation}
From (\ref{e:8}) we have
\begin{equation}\label{e:11}
\theta(k)=\frac{1}{2}\frac{1}{(x+k)\log(x+k)-(x+k)\log(x+k-1)-1}-k-x+1.
\end{equation}
So in order to prove that $\theta$ is strictly increasing on $[1,\infty]$, we only need to show that
\begin{equation*}
f(u):=\frac{1}{2}\frac{1}{(u+1)\log(u+1)-(u+1)\log u-1}-u
\end{equation*}
is strictly increasing on $(0,\infty)$. Setting $u=1/t$, we get
\begin{equation*}
f(1/t)=\frac{1}{2}\frac{t}{(t+1)\log(t+1)-t}-\frac{1}{t}.
\end{equation*}
Differentiating gives
\begin{equation}\label{e:12}
-\frac{1}{t^2}f'(1/t)=\frac{h(t)}{2t^2((t+1)\log(t+1)-t)^2},
\end{equation}
where
\begin{equation*}
h(t)=t^2\log(t+1)-t^3+2((t+1)\log(t+1)-t)^2.
\end{equation*}
Differentiating three times, we get
\begin{equation*}
(1+t)h'''(t)=8\log(1+t)-\frac{6t^3+12t^2+8t}{(t+1)^2}=g(t), \quad \mbox{say}.
\end{equation*}
\begin{equation*}
  g'(t)=-\frac{6t^3+10t^2}{(1+t)^3}<0,
\end{equation*}
that is, $h'''(t)<0.$ Since $h(0)=h'(0)=h''(0)=0$, this implies that $h(t)<0$ for $t>0$. Thus we conclude from (\ref{e:12}) that $\theta$ is strictly increasing on $[1,\infty)$. By applying L'Hospital rule we can easily compute that
\begin{equation}\label{e:13}
\theta(\infty):=\lim\limits_{k\to\infty}\theta(k)=\lim\limits_{u\to\infty}f(u)=\lim\limits_{t\to0}f(1/t)=\frac{1}{3}.
\end{equation}
Also, from (\ref{e:11}) it is clear that
\begin{equation}\label{e:14}
\theta(1)=\frac{1}{2}\frac{1}{(x+1)\log(x+1/x)-1}-x.
\end{equation}
Utilizing (\ref{e:10}), as a  direct consequence of the fact that $\theta$ is strictly increasing, we obtain
\begin{align*}
-\frac{1}{2}\bigg[-\gamma&+\sum\limits_{k=1}^\infty\left(\frac{1}{k}-\frac{1}{k+x-1+\theta(\infty)}\right)\bigg]<\log\Gamma(x)-x\log x+x\\
&-\frac{1}{2}\log(2\pi)<-\frac{1}{2}\bigg[-\gamma+\sum\limits_{k=1}^\infty\left(\frac{1}{k}-\frac{1}{k+x-1+\theta(1)}\right)\bigg].
\end{align*}
By (\ref{e:2}) this is equivalent to
\begin{equation*}
\exp\bigg\{-\frac{1}{2}\psi(x+\theta(\infty))\bigg\}<\frac{\Gamma(x)}{x^xe^{-x}\sqrt{2\pi}}<\exp\bigg\{-\frac{1}{2}\psi(x+\theta(1))\bigg\}.
\end{equation*}
Taking into account identities (\ref{e:13}) and (\ref{e:14}) this completes the proof of Theorem 2.1.
\end{proof}
Our next theorem provides new bounds for the gamma function.
\begin{thm}Let $x\geq 1$. Then we have
\begin{equation}\label{e:15}
    \sqrt{2\pi}x^xe^{-x}\left(x^2+\frac{x}{3}+a_*\right)^{\frac{1}{4}}\leq \Gamma(x+1)<\sqrt{2\pi}x^xe^{-x}\left(x^2+\frac{x}{3}+a^*\right)^{\frac{1}{4}},
\end{equation}
where the constants $a_*=\frac{e^4}{4\pi ^2}-\frac{4}{3}=0.049653963176...$ and $a^*=\frac{1}{18}=0.05555...$ are the best possible.
\end{thm}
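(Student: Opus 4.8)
The plan is to deduce both inequalities from the monotonicity of a single auxiliary function. Put
\[
F(x)=\frac{\Gamma(x+1)}{\sqrt{2\pi}\,x^{x}e^{-x}}, \qquad \mu(x)=F(x)^{4}-x^{2}-\frac{x}{3}\qquad(x\ge 1),
\]
so that (\ref{e:15}) says precisely $a_{*}\le\mu(x)<a^{*}$ for all $x\ge 1$, and the constants are best possible exactly when $a_{*}=\inf_{x\ge 1}\mu(x)$ and $a^{*}=\sup_{x\ge 1}\mu(x)$. I would therefore aim to show that $\mu$ is strictly increasing on $[1,\infty)$; granting this, $\mu(1)\le\mu(x)<\lim_{x\to\infty}\mu(x)$, and it remains only to identify the two endpoints. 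Since $\Gamma(2)=1$, a direct computation gives $\mu(1)=\bigl(e/\sqrt{2\pi}\bigr)^{4}-\frac{4}{3}=\frac{e^{4}}{4\pi^{2}}-\frac{4}{3}=a_{*}$. For the limit I would substitute Stirling's expansion $\log\Gamma(x+1)=\bigl(x+\frac12\bigr)\log x-x+\frac12\log(2\pi)+\frac{1}{12x}+O(x^{-3})$ into $F(x)^{4}$, obtaining $F(x)^{4}=x^{2}\exp\bigl(\frac{1}{3x}+O(x^{-3})\bigr)=x^{2}+\frac{x}{3}+\frac{1}{18}+O(x^{-1})$, whence $\lim_{x\to\infty}\mu(x)=\frac{1}{18}=a^{*}$.

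The core of the argument is the monotonicity of $\mu$. Differentiating $\log F(x)=\log\Gamma(x+1)-x\log x+x-\frac12\log(2\pi)$ gives $(\log F)'(x)=\psi(x+1)-\log x$, hence
\[
\mu'(x)=4F(x)^{4}\bigl(\psi(x+1)-\log x\bigr)-2x-\frac{1}{3},
\]
and the whole theorem reduces to the single inequality
\[
4F(x)^{4}\bigl(\psi(x+1)-\log x\bigr)>2x+\frac{1}{3}\qquad(x\ge 1).
\]
Since $\psi(x+1)-\log x>0$, this is equivalent to $\psi(x+1)-\log x>\frac{6x+1}{12\,F(x)^{4}}$, and also, after taking logarithms, to the positivity on $[1,\infty)$ of
\[
\log 4+4\log\Gamma(x+1)-4x\log x+4x-2\log(2\pi)+\log\bigl(\psi(x+1)-\log x\bigr)-\log\Bigl(2x+\frac{1}{3}\Bigr).
\]
I would attack this last assertion either by the device used in the proof of Theorem~2.1 --- substituting $x=1/t$ and differentiating in $t$, in the hope that a suitable derivative has constant sign on $(0,1]$ --- or by inserting into $4F(x)^{4}(\psi(x+1)-\log x)$ the sharp two-sided bounds coming from the asymptotic series of $\psi(x+1)-\log x$ and of $\log\Gamma(x+1)$ (for the latter one may also use (\ref{e:3})), thereby reducing the claim to an elementary inequality in $x$ to be checked on a bounded initial interval by direct estimation and on the tail by a Taylor expansion at $t=0$.

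The main obstacle is exactly this inequality, which is extremely tight. Because $\mu(x)\to a^{*}$ we have $\mu'(x)=O(x^{-2})$, so the two sides of $4F(x)^{4}(\psi(x+1)-\log x)>2x+\frac{1}{3}$ share the same asymptotic expansion through the constant term, and even the first correction terms must be tracked; no crude bound can close the gap. The real work is therefore to find estimates for $\psi(x+1)-\log x$ and for $\log\Gamma(x+1)$ that are simultaneously sharp enough and simple enough to be combined, which I expect to require peeling off several terms of the relevant expansions together with a sign analysis of the remainder in the style used earlier in the paper (a function whose low-order derivatives vanish at the origin and one of whose higher derivatives has constant sign), supplemented by a numerical check for $x$ in a bounded range near $1$.
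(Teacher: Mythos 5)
Your setup coincides with the paper's: your $\mu$ is exactly the paper's auxiliary function $\phi$, your formula $\mu'(x)=4F(x)^{4}\bigl(\psi(x+1)-\log x\bigr)-2x-\tfrac13$ is the paper's expression for $\phi'$, and the endpoint identifications $\mu(1)=\tfrac{e^{4}}{4\pi^{2}}-\tfrac43$ and $\lim_{x\to\infty}\mu(x)=\tfrac1{18}$ (which the paper gets by sandwiching with (\ref{e:17}) rather than by Stirling, an immaterial difference) are handled the same way. But the proposal does not prove the theorem, because the one step that carries all of its content --- the inequality $4F(x)^{4}\bigl(\psi(x+1)-\log x\bigr)>2x+\tfrac13$ for $x\geq1$, i.e.\ $\mu'>0$ --- is only announced as something you ``would attack'' by one of two routes, and you yourself concede you do not yet have bounds sharp enough to close it. As you note, the two sides differ only by a quantity of order $x^{-2}$ (roughly $\tfrac{2}{405x^{2}}$), so nothing short of an executed quantitative argument, with explicit remainder bounds pointing in the correct directions, settles it; a plan to ``find estimates\ldots which I expect to require\ldots'' is a genuine hole, not a proof. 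Your first suggested route (substituting $x=1/t$ as in Theorem 2.1 and hoping some derivative has constant sign) is also doubtful as stated, since $\mu'$ is not an elementary function of $x$ and no finite differentiation removes the gamma and digamma terms.

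For comparison, the paper closes exactly this gap by a concrete version of your second route: it inserts the lower Ramanujan-type bound $\Gamma(x+1)>\sqrt{\pi}\,x^{x}e^{-x}\bigl(8x^{3}+4x^{2}+x+\tfrac1{100}\bigr)^{1/6}$ from (\ref{e:17}) together with Alzer's upper bound for $\log x-\psi(x)$ given by the truncated asymptotic series ending in $-\tfrac{1}{12x^{14}}$; both bounds go in the right direction and retain enough accuracy, and after clearing denominators the claim $\phi'(x)>0$ reduces to the explicit polynomial inequality (\ref{e:18}), which is verified on $[1,\infty)$ by expanding at $x=1$ and checking that all Taylor coefficients are positive (a computer-algebra computation). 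Until you carry out some such program --- a specific, correctly oriented pair of bounds for $F(x)^{4}$ and $\psi(x+1)-\log x$, plus a complete verification of the resulting elementary inequality including the delicate region near $x=1$ --- the central assertion of the theorem remains unproved.
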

\begin{proof} The double inequality (\ref{e:15}) can be written as
\begin{equation}\label{e:16}
    a_*\leq \left(\frac{\Gamma(x+1)}{\sqrt{2\pi}x^xe^{-x}}\right)^4-x^2-\frac{x}{3}<a^*.
\end{equation}
In order to prove (\ref{e:15}) we define for $x>0$: $$\phi(x)=\left(\frac{\Gamma(x+1)}{\sqrt{2\pi} x^{x}e^{-x}}\right)^4-x^2-\frac{x}{3}.$$
If we differentiate, we get $$\phi'(x)=\frac{(\Gamma(x+1))^4(1/x-(\log x-\psi(x)))}{\pi^2x^{4x}e^{-4x}}-2x-\frac{1}{3}.$$
Applying the inequalities
\begin{align}\label{e:17}
\sqrt{\pi}x^xe^{-x}&\left(8x^3+4x^2+x+\frac{1}{100}\right)^\frac{1}{6}<\Gamma(x+1)\notag\\
&<\sqrt{\pi}x^xe^{-x}\left(8x^3+4x^2+x+\frac{1}{30}\right)^\frac{1}{6},
\end{align}
see \cite{2}, and
\begin{align*}
\log x-\psi(x)<\frac{1}{2x}&-\frac{1}{12x^2}+\frac{1}{120x^4}-\frac{1}{252x^6}+\frac{1}{240x^8}\\
&-\frac{1}{132x^{10}}+\frac{691}{32760x^{12}}-\frac{1}{12x^{14}},
\end{align*}
see \cite[Theorem 8]{3}, we get $$\phi'(x)>\left(8x^3+4x^2+x+\frac{1}{100}\right)^{\frac{2}{3}}p_1(x)-p_2(x)(2x+1/3).$$
where
\begin{align*}
p_1(x)=&-60060+15202x^2-5460x^4+3003x^6-2860x^8+6006x^{10}\\
&-60060x^{12}+360360x^{13}
\end{align*}
and $$p_2(x)=720720x^{14}.$$
In view of this inequality in order to show that  $\phi$ is strictly increasing for $x\geq 1$ it is enough to see
\begin{equation}\label{e:18}
    \left(8x^3+4x^2+x+\frac{1}{100}\right)^2(p_1(x))^3-(p_2(x))^3(2x+1/3)^3>0
\end{equation}
for $x\geq 1$. If we expand the polynomial on the left hand side into its Taylor series at x=1, we obtain
\begin{align*}
 &\left(8x^3+4x^2+x+\frac{1}{100}\right)^2(p_1(x))^3-(p_2(x))^3(2x+1/3)^3\\
 &>1221+4595(x-1)+8755(x-1)^2+1115(x-1)^3+1061(x-1)^4\\
 &+8004(x-1)^5+4961(x-1)^6+2589(x-1)^7+1158(x-1)^8\\
 &+4505(x-1)^9+\cdots +8135(x-1)^{24}+6374(x-1)^{25}+4561(x-1)^{26}\\
 &+2977(x-1)^{27}+\cdots+2416(x-1)^{38}+3636(x-1)^{39}+4445(x-1)^{40}\\
 &+4242(x-1)^{41}+2963(x-1)^{42}+1347(x-1)^{43}+2994(x-1)^{44}\\
 \end{align*}
which is positive since all the coefficients are positive. We therefore have $\phi'(x)>0$ for $x\geq 1$. From increasing  monotonicity of $\phi$ on $[1,\infty)$, we conclude that
 $$\phi(1)=\frac{e^4}{4\pi^2}-\frac{4}{3}\leq\phi(x)<\mathop {\lim }\limits_{x \to \infty }\phi(x).$$
 It remains to prove that $\mathop {\lim }\limits_{x \to \infty }\phi(x)=1/18.$ By the help of  (\ref{e:17}) we find that
\begin{align*}
\left(8x^3+4x^2+x+\frac{1}{100}\right)^\frac{2}{3}&-2x-1/3<\phi(x)\\
&<\left(8x^3+4x^2+x+\frac{1}{30}\right)^\frac{2}{3}-2x-1/3.
\end{align*}
It can be easily shown that the limits of both of the bounds here tend to 1/18 as $x$ approaches $\infty$. This completes the proof of Theorem 2.2.
\end{proof}

\begin{thm}For all $x>0$, we have
\begin{equation}\label{e:19}
\alpha_* .x^xe^{-x}\left(x^2+\frac{x}{3}+\frac{1}{18}\right)^{\frac{1}{4}}<\Gamma(x+1)<\alpha^* .x^xe^{-x}\left(x^2+\frac{x}{3}+\frac{1}{18}\right)^\frac{1}{4},
\end{equation}
where $\alpha_*=\sqrt[4]{18}/\sqrt{2\pi}=0.821728...$  and $\alpha^*=\sqrt{2\pi}=2.50663...$ are the best possible constants.
\end{thm}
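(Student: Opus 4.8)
The plan is to clear the common factor in (\ref{e:19}): dividing by $x^{x}e^{-x}\bigl(x^{2}+x/3+1/18\bigr)^{1/4}$ turns it into
\[
\alpha_{*}<F(x)<\alpha^{*}\quad(x>0),\qquad F(x):=\frac{\Gamma(x+1)}{x^{x}e^{-x}\bigl(x^{2}+\frac{x}{3}+\frac{1}{18}\bigr)^{1/4}},\quad H(x):=\log F(x).
\]
I would first record the two boundary values of $F$. As $x\to0^{+}$, from $\Gamma(1)=1$, $x^{x}\to1$ and $e^{-x}\to1$ we get $\lim_{x\to0^{+}}F(x)=(1/18)^{-1/4}=\sqrt[4]{18}$; and by Stirling, $\log\Gamma(x+1)=(x+\tfrac12)\log x-x+\tfrac12\log(2\pi)+O(1/x)$ together with $\tfrac14\log(x^{2}+x/3+1/18)=\tfrac12\log x+O(1/x)$ gives $\lim_{x\to\infty}F(x)=\sqrt{2\pi}$. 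These two limits are exactly the sharp constants in (\ref{e:19}), so both the inequalities and their best-possibility follow as soon as $H$ is shown to be strictly increasing on $(0,\infty)$.

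For $x\ge1$ this, and in fact the inequalities themselves, can be extracted from Theorem 2.2: the upper estimate in (\ref{e:15}) with $a^{*}=1/18$ is literally $F(x)<\sqrt{2\pi}$, and, writing $\phi$ for the function appearing there, a short computation gives $F(x)^{4}=(2\pi)^{2}\bigl(1-\frac{1/18-\phi(x)}{x^{2}+x/3+1/18}\bigr)$. Since $\phi$ is increasing on $[1,\infty)$ with $\phi<1/18$, the numerator $1/18-\phi(x)$ is positive and decreasing while $x^{2}+x/3+1/18$ is positive and increasing, so $F^{4}$, and hence $H$, is increasing, with $F(x)\ge F(1)=(18e^{4}/25)^{1/4}>\sqrt[4]{18}$ because that inequality reduces to $e^{4}>25$.

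The substantive part is the range $0<x<1$, which I would treat analytically. Differentiating and simplifying the rational part,
\[
H'(x)=\psi(x+1)-\log x-\frac{3(6x+1)}{2(18x^{2}+6x+1)},
\]
and using the classical representations
\[
\psi(x+1)-\log x=\int_{0}^{\infty}e^{-xt}\Bigl(\frac{1}{t}-\frac{1}{e^{t}-1}\Bigr)dt,\qquad\frac{3(6x+1)}{2(18x^{2}+6x+1)}=\frac12\int_{0}^{\infty}e^{-xt}e^{-t/6}\cos(t/6)\,dt
\]
(the second by a direct Laplace-transform evaluation, using $18x^{2}+6x+1=18(x+\tfrac16)^{2}+\tfrac12$), one obtains $H'(x)=\int_{0}^{\infty}e^{-xt}p(t)\,dt$ with
\[
p(t)=\frac{1}{t}-\frac{1}{e^{t}-1}-\frac12\,e^{-t/6}\cos(t/6).
\]
It then suffices to prove $p(t)\ge0$ for all $t>0$; this gives $H'>0$ on the entire half-line, so the case $x\ge1$ above is merely a shortcut. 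For $t\ge3\pi$ the bound $p(t)>0$ follows from the elementary estimate $\frac1t-\frac1{e^t-1}>\frac12e^{-t/6}\ge\frac12e^{-t/6}\cos(t/6)$, so the genuine obstacle, and the step I expect to require the most care, is $0<t<3\pi$, where $\cos(t/6)>0$ and one must show $\frac1t-\frac1{e^t-1}\ge\frac12e^{-t/6}\cos(t/6)$. This is delicate because the two sides agree through second order at $t=0$, their difference being $\frac{t^{3}}{1620}+\frac{t^{4}}{15552}+O(t^{5})$, so no crude bound such as $\cos(t/6)\le1$ can work. The route I would take is to dispose of a short initial interval $0<t\le t_{0}$ by a Taylor expansion of $p$ with an explicit remainder estimate, and the compact interval $[t_{0},3\pi]$ by replacing $\cos(t/6)$ and $e^{t}$ with suitable elementary over/under-estimates, reducing to a polynomial inequality that is checked directly. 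Once $p\ge0$ is established, $H$ is strictly increasing on $(0,\infty)$, and together with the two limits this gives (\ref{e:19}) with $\alpha_{*}$ and $\alpha^{*}$ best possible.
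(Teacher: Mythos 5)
Your reduction is sound as far as it goes: the identity $F(x)^4=(2\pi)^2\left(1-\frac{1/18-\phi(x)}{x^2+x/3+1/18}\right)$ together with the monotonicity of $\phi$ from Theorem 2.2 does settle $x\ge 1$, the two limiting values of $F$ are correct, and both integral representations (for $\psi(x+1)-\log x$ and for the rational term) are right, so everything hinges on $p(t)=\frac{1}{t}-\frac{1}{e^t-1}-\frac{1}{2}e^{-t/6}\cos(t/6)\ge 0$. But that inequality, which carries the entire analytic content of the theorem on $0<x<1$, is never actually proved: on $(0,3\pi)$ you offer only a plan (Taylor expansion with remainder near $0$, polynomial over/under-estimates on a compact interval), and on $[3\pi,\infty)$ the asserted bound $\frac{1}{t}-\frac{1}{e^t-1}>\frac{1}{2}e^{-t/6}$ is also left unargued even though its margin at $t=3\pi$ is only about $2\times 10^{-3}$. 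The claim $p\ge 0$ is true but genuinely delicate: besides the third-order tangency at $t=0$ that you note, one has $p(1/2)\approx 8\times 10^{-5}$ and $p(1)\approx 6.5\times 10^{-4}$, so the ``suitable elementary estimates'' would have to be accurate to several digits across an interval of length $3\pi$; nothing in the proposal exhibits such estimates or the resulting polynomial inequality. Note also that $p\ge 0$ would prove that $H'$ is completely monotonic, which is strictly more than the theorem requires, so you have made the problem harder than necessary.

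For comparison, the paper obtains $H'>0$ (its $\Theta'$, which differs from your $H$ only by the constant $\frac{1}{2}\log 2\pi$) with no integral representation at all: using $\psi'(x+1)-\psi'(x)=-1/x^2$ it computes $\Theta''(x+1)-\Theta''(x)=p(x)/q(x)$ with explicit polynomials having positive coefficients, then by induction and $\Theta''(x+n)\to 0$ concludes $\Theta''<0$, hence $\Theta'$ is decreasing and $\Theta'(x)>\lim_{n\to\infty}\Theta'(x+n)=0$; the endpoint values $\Theta(0)$ and $\Theta(\infty)=0$ then give (\ref{e:19}) on all of $(0,\infty)$ in one stroke. So either carry out the positivity proof for $p$ on $(0,3\pi]$ and the tail estimate in full, or replace that step by the telescoping argument; your $x\ge 1$ shortcut via (\ref{e:15}) and your limit computations are fine but do not substitute for the missing case $0<x<1$. (A minor remark: your limits identify the sharp lower constant for the ratio $\Gamma(x+1)/\left(x^xe^{-x}(x^2+x/3+1/18)^{1/4}\right)$ as $\sqrt[4]{18}$, which is what the paper's own proof yields; the constant $\alpha_*=\sqrt[4]{18}/\sqrt{2\pi}$ in the statement matches this only after reinstating a $\sqrt{2\pi}$ normalization, so your reading is the correct one and is not where the difficulty lies.)
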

\begin{proof}For $x\geq 0$, we define
\begin{equation*}
\Theta(x)=\log(\Gamma(x+1))-x\log x+x-\frac{1}{2}\log 2\pi-\frac{1}{4}\log \left(x^2+\frac{x}{3}+\frac{1}{18}\right).
\end{equation*}
If we differentiate we find that
\[
\Theta'(x)=\psi(x+1)-\log x-\frac{18x+3}{36x^2+12x+2}
\]
and
\[
\Theta''(x)=\psi'(x+1)-\frac{1}{x}+\frac{162x^2+54x}{(18x^2+6x+1)^2}.
\]
By using the well known functional relation
$$
\psi'(x+1)-\psi'(x)=-\frac{1}{x^2}
$$
we obtain
\begin{equation*}
\Theta''(x+1)-\Theta''(x)=\frac{p(x)}{q(x)},
\end{equation*}
where
\[
p(x)=625 + 9816 x + 42516 x^2 + 63936 x^3 + 38556 x^4 + 7776 x^5>0
\]
and
\[
q(x)=x (1 + x)^2 (1 + 6 x + 18 x^2)^2 (25 + 42 x + 18 x^2)^2>0.
\]
This shows that $\Theta''(x+1)-\Theta''(x)>0$ for $x\geq0$. By mathematical induction we have $\Theta''(x)<\Theta''(x+n)$ for all $n\in \mathbb{N}$, and $\Theta''(x)<\lim\limits_{n\to\infty}\Theta''(x+n)=0$, that is, $\Theta'$ is strictly decreasing on $(0,\infty)$. Similarly, $\Theta'(x)>\Theta'(x+n)>\lim\limits_{n\to\infty}\Theta'(x+n)=0$, since $\lim\limits_{x\to\infty}[\log x-\psi(x)]=0$; see \cite{3}. This reveals that $\Theta$ is strictly inceasing. Applying the classical Stirling formula we find
$\lim\limits_{x\to\infty}\Theta(x)=0$, we therefore conclude that
\[
\frac{1}{4}\log 18-\frac{1}{2}\log 2\pi=\Theta(0)\leq \Theta(x)<\mathop {\lim }\limits_{x \to \infty }\Theta(x)=0,
\]
which is equivalent with (\ref{e:19}).
\end{proof}
The following is a natural consequence of the fact that $\Theta$ is strictly increasing and  $\Theta(1)=1-\frac{1}{2}\log (2\pi)-\frac{1}{4}\log (\frac{25}{18})$.
\begin{cor} For all natural number $n$, we have
\[
\beta_*\cdot n^ne^{-n}\left(n^2+\frac{n}{3}+\frac{1}{18}\right)^{\frac{1}{4}}<n!\leq\beta^*\cdot n^ne^{-n}\left(n^2+\frac{n}{3}+\frac{1}{18}\right)^\frac{1}{4},
\]
where the scalers $\beta_*=\frac{e}{\sqrt{2\pi}}\left(\frac{18}{25}\right)^{\frac{1}{4}}=0.998936...$ and $\beta^*=\sqrt{2\pi}=2.50663...$ are best possible.
\end{cor}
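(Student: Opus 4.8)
The plan is to obtain this corollary as an immediate specialization of Theorem 2.3, exactly as the sentence preceding the statement already hints. Since $\Theta$ was shown in the proof of Theorem 2.3 to be strictly increasing on $(0,\infty)$ with $\lim_{x\to\infty}\Theta(x)=0$, for every integer $n\ge 1$ we have $\Theta(1)\le\Theta(n)<0$, and I would simply rewrite these two inequalities in terms of $\Gamma(n+1)=n!$.

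First I would record the value $\Theta(1)=1-\tfrac{1}{2}\log(2\pi)-\tfrac14\log\bigl(1^2+\tfrac13+\tfrac1{18}\bigr)=1-\tfrac12\log(2\pi)-\tfrac14\log\tfrac{25}{18}$, which is the content of the ``$\Theta(1)=\dots$'' clause in the statement. Then, unwinding the definition of $\Theta$ at an integer $n$, the inequality $\Theta(1)\le\Theta(n)<0$ reads
\[
1-\tfrac12\log(2\pi)-\tfrac14\log\tfrac{25}{18}\ \le\ \log(n!)-n\log n+n-\tfrac12\log(2\pi)-\tfrac14\log\!\Bigl(n^2+\tfrac n3+\tfrac1{18}\Bigr)\ <\ 0.
\]
Adding $n\log n-n+\tfrac12\log(2\pi)+\tfrac14\log(n^2+\tfrac n3+\tfrac1{18})$ throughout and exponentiating turns this into
\[
\beta_*\cdot n^ne^{-n}\Bigl(n^2+\tfrac n3+\tfrac1{18}\Bigr)^{1/4}\ \le\ n!\ <\ \sqrt{2\pi}\cdot n^ne^{-n}\Bigl(n^2+\tfrac n3+\tfrac1{18}\Bigr)^{1/4},
\]
with $\beta_*=\exp\bigl(1-\tfrac12\log(2\pi)-\tfrac14\log\tfrac{25}{18}\bigr)=\dfrac{e}{\sqrt{2\pi}}\bigl(\tfrac{18}{25}\bigr)^{1/4}$ and $\beta^*=\exp\bigl(\tfrac12\log(2\pi)\bigr)=\sqrt{2\pi}$; a quick numerical check gives $\beta_*=0.998936\dots$ and $\beta^*=2.50663\dots$. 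The left inequality is an equality at $n=1$, which is slightly stronger than what the corollary claims but consistent with a ``$\le$''.

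For optimality I would argue as in Theorem 2.3: because $\Theta$ is strictly increasing, $n=1$ is the point of its minimum over the integers, so $\beta_*$ cannot be replaced by anything larger without failing at $n=1$; and since $\Theta(n)\to 0^-$ as $n\to\infty$, the constant $e^{\Theta(n)+\frac12\log 2\pi}$ increases to $\sqrt{2\pi}$, so $\beta^*=\sqrt{2\pi}$ cannot be lowered without failing for all large $n$. The only mild subtlety — really a bookkeeping point rather than an obstacle — is to make sure the direction of the inequalities and the open/closed endpoints are transcribed correctly when passing through the exponential, and that the stated decimal expansions of $\beta_*$ and $\beta^*$ match the closed forms; there is no genuine analytic difficulty here since all the work was already done in Theorem 2.3.
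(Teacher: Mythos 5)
Your route is the paper's own: the paper gives only a one-sentence justification, reading the corollary off from the monotonicity of $\Theta$ proved in Theorem 2.3 together with the value $\Theta(1)$, and that is exactly your plan. But the ``bookkeeping point'' you yourself flag is where your writeup goes wrong. From $\Theta(1)\le\Theta(n)<0$ and $n!=e^{\Theta(n)}\,\sqrt{2\pi}\,n^ne^{-n}\bigl(n^2+\tfrac n3+\tfrac1{18}\bigr)^{1/4}$, exponentiating the lower bound puts the constant $e^{\Theta(1)}\sqrt{2\pi}=e\,(18/25)^{1/4}\approx 2.50396$ in front of $n^ne^{-n}(\cdots)^{1/4}$: the term $\tfrac12\log(2\pi)$ you add back contributes a factor $\sqrt{2\pi}$ on the left exactly as it does on the right. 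Your text instead identifies the lower constant with $e^{\Theta(1)}=\frac{e}{\sqrt{2\pi}}(18/25)^{1/4}\approx 0.998936$ and simultaneously asserts equality at $n=1$; these are incompatible, since with your constant the left-hand side at $n=1$ equals $\frac{e}{\sqrt{2\pi}}(18/25)^{1/4}\,e^{-1}(25/18)^{1/4}=1/\sqrt{2\pi}\approx 0.399$, not $1$. Equality at $n=1$ holds precisely for the constant $e\,(18/25)^{1/4}$.

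As a consequence, your optimality argument for $\beta_*$ does not go through as written: with $\beta_*\approx 0.9989$ the lower inequality neither becomes an equality nor fails at $n=1$, and indeed $\beta_*$ could be enlarged up to $e\,(18/25)^{1/4}=\sqrt{2\pi}\,\beta_*$, so ``best possible'' is only correct for that larger constant. (This mismatch mirrors what appears to be a misprint in the corollary as stated --- either the display is missing a factor $\sqrt{2\pi}$ on the left, or $\beta_*$ should read $e(18/25)^{1/4}$; note also that with the sharp constant the natural punctuation is ``$\le$'' on the left, with equality at $n=1$, and strict ``$<$'' on the right, since $\Theta(n)<0$ for all $n$.) A correct writeup should either prove the sharp version with $e(18/25)^{1/4}$ and record the equality case, or explicitly note and resolve the discrepancy, rather than deriving a constant that contradicts your own equality claim.
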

Our last theorem gives elegant bounds for the gamma function in terms of the digamma function.
\begin{thm} For $x>0$ we have
\begin{equation*}
\exp\left[x\psi\left(\frac{x}{\log(x+1)}\right)\right]\leq\Gamma(x+1)\leq\exp\left[x\psi\left(\frac{x}{2}+1\right)\right].
\end{equation*}
\end{thm}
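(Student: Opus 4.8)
The plan is to take logarithms and reduce both inequalities to Jensen's inequality applied to the single identity
\[
\log\Gamma(x+1)=\int_1^{x+1}\psi(t)\,dt ,
\]
which follows from $\psi=(\log\Gamma)'$ together with $\Gamma(1)=1$. Dividing by $x$, the quantity $\frac1x\log\Gamma(x+1)$ is exactly the mean value of $\psi$ over the interval $[1,x+1]$, so the asserted double inequality is precisely the statement that this mean lies between $\psi$ evaluated at the harmonic-mean point $x/\log(x+1)$ and at the arithmetic-mean point $1+x/2$ of $[1,x+1]$.

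For the upper bound I would invoke that $\psi$ is strictly concave on $(0,\infty)$, since $\psi''(t)=-2\sum_{k\ge0}(t+k)^{-3}<0$. The (integral) Jensen inequality then gives
\[
\frac1x\int_1^{x+1}\psi(t)\,dt\le\psi\!\left(\frac1x\int_1^{x+1}t\,dt\right)=\psi\!\left(1+\frac{x}{2}\right),
\]
and multiplying by $x$ and exponentiating yields $\Gamma(x+1)\le\exp\bigl[x\psi(x/2+1)\bigr]$.

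The lower bound is the substantive part, and the idea is to feed Jensen a different, \emph{convex} function, namely $\Phi(z):=\psi(1/z)$. Its convexity on $(0,\infty)$ can be read off from the series expansion
\[
\psi(1/z)=-\gamma+\sum_{k=0}^{\infty}\left(\frac{1}{k+1}-\frac{z}{kz+1}\right),
\]
since each map $z\mapsto z/(kz+1)$ is concave on $(0,\infty)$; equivalently, a direct differentiation gives $\Phi''(z)=w^{3}\bigl(w\psi''(w)+2\psi'(w)\bigr)$ with $w=1/z$, and $w\psi''(w)+2\psi'(w)=2\sum_{k\ge0}k(w+k)^{-3}\ge0$. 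Applying Jensen to $\Phi$ with the test function $t\mapsto 1/t$ on $[1,x+1]$,
\[
\psi\!\left(\frac{x}{\log(x+1)}\right)=\Phi\!\left(\frac1x\int_1^{x+1}\frac{dt}{t}\right)\le\frac1x\int_1^{x+1}\Phi(1/t)\,dt=\frac1x\int_1^{x+1}\psi(t)\,dt=\frac{\log\Gamma(x+1)}{x},
\]
because $\frac1x\int_1^{x+1}t^{-1}\,dt=\frac{\log(x+1)}{x}$; multiplying by $x$ and exponentiating gives the left-hand inequality. (At $x=0$ both sides equal $\Gamma(1)=1$ in the limit, and for $x>0$ one in fact gets strict inequalities from the strict versions of Jensen.) The main obstacle is precisely recognizing that the correct object for the lower bound is $z\mapsto\psi(1/z)$ and checking its convexity; once the identity for $w\psi''(w)+2\psi'(w)$ (or the concavity of $z/(kz+1)$) is noted, everything else is routine.
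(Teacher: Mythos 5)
Your proof is correct, and it takes a genuinely different route from the paper. The paper works with the series $\log\Gamma(x+1)=-\gamma x+\sum_{k\ge1}[x/k-\log(x+k)+\log k]$, applies the Mean Value Theorem termwise to write $\log(x+k)-\log k=x/(k+\varphi(k))$, proves that $\varphi$ is increasing in $k$ via the geometric--logarithmic mean inequality, and then replaces $\varphi(k)$ by its extreme values $\varphi(1)=x/\log(x+1)-1$ and $\varphi(\infty)=x/2$, recognizing the resulting series as $\psi$ evaluated at the two bound points. You instead use the single identity $\log\Gamma(x+1)=\int_1^{x+1}\psi(t)\,dt$ and integral Jensen: concavity of $\psi$ gives the upper bound at the arithmetic-mean point $1+x/2$, while convexity of $\Phi(z)=\psi(1/z)$ (verified correctly, since $w\psi''(w)+2\psi'(w)=2\sum_{k\ge0}k(w+k)^{-3}\ge0$, or termwise from concavity of $z\mapsto z/(kz+1)$) gives the lower bound at the logarithmic-mean point $x/\log(x+1)$ of the interval $[1,x+1]$. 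Conceptually the two arguments bound the same quantity by the arithmetic and logarithmic means of the endpoints, but the paper does this discretely term by term (with monotonicity of the intermediate point doing the work), whereas you do it once globally; your convexity lemma for $\psi(1/z)$ plays the role of the paper's $G\le L$ inequality. Your version is arguably cleaner and yields strict inequalities for $x>0$ just as the paper's does, at the cost of identifying the slightly less standard convexity property of $\psi(1/z)$.
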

\begin{proof} Applying the the Mean Value Theorem for differentiation  we get
\begin{equation}\label{e:20}
\log(x+k)-\log k=\frac{x}{k+\varphi(k)},\quad 0<\varphi(k)<x.
\end{equation}
Therefore (\ref{e:1}) can be written as follows
\begin{equation}\label{e:21}
\log \Gamma(x+1)=x\bigg[-\gamma+\sum\limits_{k=1}^\infty\left(\frac{1}{k}-\frac{1}{k+\varphi(k)}\right)\bigg]
\end{equation}
From (\ref{e:20}) we obtain
\begin{equation*}
\varphi(k)=\frac{x}{\log (1+x/k)}-k.
\end{equation*}
Differentiation gives
\begin{equation*}
\varphi'(k)=\frac{\frac{x^2}{k^2+kx}-\log^2\left(1+\frac{x}{k}\right)}{\log^2\left(1+\frac{x}{k}\right)}.
\end{equation*}
It follows that $\varphi'(k)>0$ if and only if
\begin{equation*}
\sqrt{k(k+x)}<\frac{(k+x)-k}{\log(k+x)-\log k},
\end{equation*}
which follows from the familiar geometric-logarithmic mean inequality $G\leq L$; see \cite[pg.134]{7}. Hence, $\varphi$ is strictly increasing on $[1,\infty)$. We have
\begin{equation*}
\varphi(\infty):=\lim\limits_{k\to\infty}\varphi(k)=\frac{x}{2}\quad \mbox{and} \quad \varphi(1)=\frac{x}{\log(x+1)}-1.
\end{equation*}
We therefore conclude from (\ref{e:21}) that for $x>0$
\begin{align*}
x\bigg[-\gamma+&\sum\limits_{k=1}^\infty\left(\frac{1}{k}-\frac{1}{k+\varphi(1)}\right)\bigg]<\log\Gamma(x+1)\\
&< x\bigg[-\gamma+\sum\limits_{k=1}^\infty\left(\frac{1}{k}-\frac{1}{k+\varphi(\infty)}\right)\bigg],
\end{align*}
which is equivalent to
\begin{equation*}
x\psi(\varphi(1)+1)<\log\Gamma(x+1)<x\psi(\varphi(\infty)+1),
\end{equation*}
completing  the proof of  Theorem 2.5.
\end{proof}
\textbf{Acknowledgement.} The author wishes  to thank an anonymous referee for reading the manuscript carefully and detecting many mistakes, which have improved the presentation of the paper.

\end{document}